\newtheorem{theorem}{Theorem}
\newtheorem{definition}{Definition}
\newtheorem{lemma}[theorem]{Lemma}
\newtheorem{observation}{Observation}
\newtheorem{conjecture}{Conjecture}
\newtheorem{problem}{Problem}
\newcommand{\zig}{\mathrm{zig}}
\author
{
Raphael Steiner 
}
\thanks{Department of Computer Science, Institute of Theoretical Computer Science, ETH Z\"{u}rich, Switzerland.  \texttt{raphaelmario.steiner@inf.ethz.ch}. Research funded by SNSF Ambizione grant No. 216071.}
\date{\today}
\title{Hadwiger's conjecture and topological bounds}
\begin{document}
\maketitle

\begin{abstract}
The Odd Hadwiger's conjecture, formulated by Gerards and Seymour in 1995, is a substantial strengthening of Hadwiger's famous coloring conjecture from 1943. We investigate whether the hierarchy of topological lower bounds on the chromatic number, introduced by Matou\v{s}ek and Ziegler (2003) and refined recently by Daneshpajouh and Meunier (2023), forms a potential avenue to a disproof of Hadwiger's conjecture or its odd-minor variant. 

\noindent In this direction, we prove that, in a very general sense, every graph $G$ that admits a topological lower bound of $t$ on its chromatic number, contains $K_{\lfloor t/2\rfloor +1}$ as an odd-minor. This solves a problem posed by Simonyi and Zsb\'{a}n [European Journal of Combinatorics, 31(8), 2110--2119 (2010)].

\noindent We also prove that if for a graph $G$ the Dol'nikov-K\v{r}\'{i}\v{z} lower bound on the chromatic number (one of the lower bounds in the aforementioned hierarchy) attains a value of at least $t$, then $G$ contains $K_t$ as a minor. 

\noindent Finally, extending results by Simonyi and Zsb\'{a}n, we show that the Odd Hadwiger's conjecture holds for Schrijver and Kneser graphs for any choice of the parameters. The latter are canonical examples of graphs for which topological lower bounds on the chromatic number are tight. 
\end{abstract}

\section{Introduction}

Given two finite graphs $G$ and $H$, we say that $G$ contains $H$ as a \emph{minor} if there exists a sequence of vertex-deletions, edge-deletions and edge-contractions (in arbitrary order) that transforms $G$ into a graph that is isomorphic to $H$. 
Hadwiger's conjecture, formulated more than 80 years ago by Hadwiger~\cite{hadwiger}, is the major open problem in the theory of graph coloring, and arguably one of the most challenging open problems in all of combinatorics. It suggests a deep connection between the chromatic number of a graph and the containment of clique-minors.
\begin{conjecture}[Hadwiger 1943~\cite{hadwiger}]\label{con:hadwiger}
Let $t$ be a positive integer. Every graph $G$ satisfying $\chi(G)\ge t$ contains $K_t$ as a minor.
\end{conjecture}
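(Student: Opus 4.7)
The statement just presented is Hadwiger's conjecture itself, which is one of the most famous open problems in combinatorics; no complete proof is known, so any honest proposal must acknowledge that up front. With that caveat, here is the line of attack I would pursue.

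The plan is to argue by contradiction from a minimal counterexample $G$, minimizing $|V(G)|+|E(G)|$ among graphs with $\chi(G)\ge t$ that contain no $K_t$-minor. Standard manipulations yield that $G$ is connected, has minimum degree at least $t-1$, and is $(t-1)$-edge-connected: a vertex of smaller degree or a smaller edge cut would let one split off a piece, $(t-1)$-color each side by induction, and glue, producing either a proper $(t-1)$-coloring or a strictly smaller counterexample. I would then analyze what happens when one contracts a single edge $uv$: minimality of $G$ forces that $G/uv$ either creates a $K_t$-minor or admits a proper $(t-1)$-coloring, and pushing this through for every edge yields rigid local information about the common neighborhoods in $G$.

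The next move is to combine this local structure with a global density bound. The Kostochka--Thomason theorem gives that every $K_t$-minor-free graph has average degree at most $c t\sqrt{\log t}$, hence $\chi(G)\le O(t\sqrt{\log t})$; the crux is eliminating the residual $\sqrt{\log t}$ factor. My preferred route would be to enlist topological lower bounds of the kind that motivate this paper: if $\chi(G)\ge t$, then a suitable equivariant obstruction (Dol'nikov--K\v{r}\'{i}\v{z}, Matou\v{s}ek--Ziegler, or the Daneshpajouh--Meunier refinement) should force a combinatorial configuration richer than a merely dense subgraph, for instance many disjoint odd walks joining large almost-color-classes, which one might then splice into the branch sets of a prospective $K_t$-minor.

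The main obstacle, by an enormous margin, is precisely the step the community has been stuck on for eight decades: converting high chromatic number into a clique \emph{minor} with linear efficiency, rather than the $O(t(\log t)^{\beta})$ dependence achieved by Norin--Postle--Song and subsequent work. The Mader density barrier cannot be crossed by any purely extremal argument, and a genuinely new tool---plausibly topological---is required. I would therefore expect the attempt to fail at this junction; indeed, the results announced in the abstract of the present paper indicate that even the odd-minor strengthening, a natural hybrid target, already escapes the strongest topological methods currently available, since topological bounds of value $t$ are shown only to force a $K_{\lfloor t/2\rfloor+1}$ odd-minor rather than a $K_t$ one.
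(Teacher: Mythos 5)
You correctly recognize that the statement is Hadwiger's conjecture itself, which is open, and that the paper states it only as a conjecture without proof; there is therefore no proof in the paper to compare against, and your candor about this is exactly right. Your exploratory sketch (minimal counterexample, degree and connectivity reductions, Kostochka--Thomason density bound, hope of leveraging topological obstructions) is a fair summary of standard and speculative lines of attack, and you accurately identify the genuine obstruction: no known method converts chromatic number $t$ into a $K_t$-minor with linear efficiency, and the topological bounds discussed in this very paper deliver only a $K_{\lfloor t/2\rfloor+1}$-minor (Theorem~\ref{thm:main}) or, for the special Dol'nikov--K\v{r}\'{i}\v{z} bound, a $K_t$-minor under a hypothesis strictly stronger than $\chi(G)\ge t$ (Theorem~\ref{thm:dolnikov}). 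In short, there is no gap to flag in your reasoning beyond the one you already name; the only correction worth emphasizing is that you should not present even the outline as a ``proposal for a proof,'' since every step after the minimal-counterexample reductions is heuristic and the conjecture remains open for all $t\ge 7$.
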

Hadwiger's conjecture is known to hold for $t\le 6$, but remains widely open for any value $t \ge 7$. It was proven for $t\le 4$ by Hadwiger himself~\cite{hadwiger} and independently by Dirac~\cite{dirac}. The case $t=5$ was proven to be equivalent to the $4$-Color-Theorem by Wagner~\cite{wagner}, and also the case $t=6$ was reduced to the $4$-Color-Theorem in a landmark paper by Robertson, Seymour and Thomas~\cite{robertson} in 1993. Recently, there has been some exciting progress towards the linear weakening of Hadwiger's conjecture, which states that there exists some absolute constant $C>0$ such that every graph $G$ with $\chi(G)\ge Ct$ contains $K_t$ as a minor~\cite{norinpostlesong,del}. The current state of the art-result coming from these works is a theorem by Delcourt and Postle~\cite{del}, stating that for some absolute constant $C>0$ and $t$ large enough every graph of chromatic number at least $Ct \log\log t$ contains $K_t$ as a minor. For more background on Hadwiger's conjecture, we refer to Seymour's 2016-survey~\cite{seymour} and a very recent update on the developments by Norin~\cite{norinsurvey}. 

A substantial qualitative strengthening of Hadwiger's conjecture was proposed in 1995 by Gerards and Seymour~\cite{gerards}. It states that, in any graph of chromatic number at least $t$, not only should it be possible to find $K_t$ as a minor, but in fact one should be able to find $K_t$ as a so-called \emph{odd-minor}. Given two graphs $G$ and $H$, we say that $G$ contains $H$ as an odd-minor, if a graph isomorphic to $H$ can be obtained from a subgraph $G'\subseteq G$ by selecting an edge-cut in $G'$ and contracting all the edges in this cut simultaneously. It is clear by definition that whenever a graph $G$ contains a graph $H$ as an odd-minor, it also contains $H$ as a minor. However, odd-minors are significantly more restricted than minors, for instance they preserve the parity of cycle-lengths. A good example to illustrate this difference are the complete bipartite graphs: While they can contain huge cliques as ordinary minors, none of them contains even $K_3$ as an odd-minor, since the containment of $K_3$ as an odd-minor necessitates the existence of an odd cycle.

\begin{conjecture}[Gerards and Seymour 1995~\cite{gerards}]\label{con:odd}
Let $t$ be a positive integer. Every graph $G$ satisfying $\chi(G)\ge t$ contains $K_t$ as an odd-minor.
\end{conjecture}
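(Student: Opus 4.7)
The plan is to route the conjecture through the topological lower bounds on chromatic number discussed in the abstract, since these are essentially the only general-purpose certificates for large $\chi(G)$ that carry structural information one might convert into branch sets of a minor. Given a graph $G$ with $\chi(G)\ge t$, the first step would be to reduce to the case where some canonical topological bound --- say the $\ZZ_2$-index of the Lov\'{a}sz box complex $\mathcal{B}(G)$ --- is itself at least $t$. One would try this via a minor-minimal counterexample argument: any drop in the topological bound ought to allow a $(t-1)$-coloring of a proper subgraph, which could then be propagated back to $G$ by the usual colour-exchange arguments.

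With such a reduction in place, I would mimic Lov\'{a}sz's classical topological proof of the Kneser conjecture and apply a Borsuk--Ulam-type theorem to an equivariant map from a sphere of dimension $t-2$ into $\mathcal{B}(G)$. The goal is to extract $t$ antipodal pairs $(A_i,B_i)$ of vertex subsets spanning a complete bipartite pattern in $G$. The extra task, absent from the usual chromatic-number argument, is to arrange the edges producing $K_t$ so that they form a single \emph{edge-cut} of the contracted graph, which is what converts a minor into an odd-minor. The paper's main theorem already achieves a halved version of this (producing $K_{\lfloor t/2\rfloor+1}$ as an odd-minor), so the objective would be to remove the factor-$2$ loss, presumably by replacing the na\"{i}ve $\ZZ_2$-index with a sharper equivariant invariant such as a cohomological index or the Daneshpajouh--Meunier refinement cited in the abstract.

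The main obstacle, and the reason the conjecture is still open, is that topological lower bounds are known to be very far from tight: there exist graphs of arbitrarily large chromatic number whose topological index stays bounded, so the reduction in the first paragraph is false in general. To rescue it, one would have to combine the topological extraction with a combinatorial densification step, in the spirit of the Norin--Postle--Song and Delcourt--Postle results mentioned in the introduction, \emph{while} preserving parity information throughout the contractions. The standard minor-extraction lemmas underlying those works contract arbitrary paths --- including even-length ones --- which destroys precisely the parity structure one needs for an odd-minor. Carrying parity through such combinatorial arguments, or alternatively designing a variant of the topological machinery that operates directly on the bipartite double cover, is in my view the truly hard step, and is what separates a proof of the ordinary Hadwiger conjecture from a proof of its odd-minor strengthening.
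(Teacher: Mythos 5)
The statement you were given is Conjecture~\ref{con:odd}, the Odd Hadwiger's conjecture of Gerards and Seymour. This is an open conjecture: the paper does not prove it, nobody has, and it is stated in the paper precisely as an unsolved problem (``it has resisted all attempts of resolution thus far''). So there is no proof in the paper to compare your attempt against, and your write-up is not a proof either. You in fact concede this yourself in your third paragraph, where you observe that ``the reduction in the first paragraph is false in general'' because there are graphs of arbitrarily large chromatic number whose topological lower bounds stay bounded. A proposal that correctly identifies a fatal gap in its own opening step is an honest assessment of the difficulty, not a proof.

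Two further points on the sketch's technical content. First, the factor-of-two loss you hope to remove by ``replacing the na\"{i}ve $\mathbb{Z}_2$-index with a sharper equivariant invariant'' cannot be eliminated by moving within the hierarchy of Figure~\ref{fig:hierarchy}: the parameter $\zig(G)$ dominates every bound in that hierarchy, and the $K_{\ell,m}$-/Zig-Zag theorem only certifies a $K_{\lfloor t/2\rfloor,\lceil t/2\rceil}$ subgraph, which is exactly what Theorem~\ref{thm:main} exploits to get an odd $K_{\lfloor t/2\rfloor+1}$-minor. The one place the paper does avoid the halving is Theorem~\ref{thm:dolnikov} for the Dol'nikov--K\v{r}\'{i}\v{z} bound $\mathrm{cd}(\mathcal{H})$, but that argument is combinatorial and uses the $2$-colorability-defect structure directly, not an equivariant index, and it yields only ordinary minors, not odd ones. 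Second, a minor-minimal counterexample reduction does not behave well with respect to topological invariants: deleting vertices or contracting edges has no controlled effect on the box complex or its index, so the ``colour-exchange'' propagation you allude to has no analogue here. This is precisely why the paper settles for the weaker Theorems~\ref{thm:main}, \ref{thm:dolnikov}, and~\ref{thm:schrijver} instead of Conjecture~\ref{con:odd} itself.
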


Given that Conjecture~\ref{con:odd} is a substantial strengthening of Hadwiger's tremendously challenging conjecture, it is not surprising that it has resisted all attempts of resolution thus far. The cases $t \le 4$ of the conjecture were proven by Catlin~\cite{catlin}, and a proof of the case $t=5$ (a strengthening of the $4$-Color-Theorem!) was announced by Guenin in 2004, cf.~\cite{seymour}, but still has not been made public. While the precise form of the conjecture remains widely open for $t \ge 6$, as for Hadwiger's conjecture, there has been progress regarding asymptotic results~\cite{geelen,norinsongodd,postleodd,steiner}, with the current state of the art being a bound of $O(t\log\log t)$ on the chromatic number of odd $K_t$-minor-free graphs from~\cite{steiner}. 
\medskip
\paragraph{\textbf{Topological lower bounds.}} If, as many researchers in the community seem to believe, the precise or even asymptotic form of Hadwiger's conjecture or its odd-minor variant is false, then one should put efforts into constructing counterexamples. In order to do so, it would be necessary to construct $K_t$-minor-free graphs of high chromatic number. But chromatic number is a surprisingly complex parameter, and we do not really understand in depth what makes the chromatic number of a graph large---in particular, approximating the chromatic number of an $n$-vertex graph to within a factor better than $n^{1-\varepsilon}$ is computationally intractable~\cite{hardness}. This suggests that one will have to rely on one of the few known lower bounds on the chromatic number to construct such graphs. One of the simplest such lower bounds is the clique number $\omega(G)$, but since a clique subgraph directly yields a clique minor of the same order, it seems badly suited for the problem. A much stronger and more general lower bound is the \emph{fractional chromatic number} $\chi_f(G)$, that is obtained by a linear programming relaxation of an integer program for the chromatic number. In fact, for many graphs of large chromatic number that one typically encounters, including random graphs, $\chi_f(G)$ will also be large and close to $\chi(G)$. However, the hopes of disproving Hadwiger's or the Odd Hadwiger's conjecture using fractional chromatic number as a lower bound are dampened by results of Reed and Seymour~\cite{reedseymour} and Kawarabayashi and Reed~\cite{kawarabayashireed}, showing that any graph $G$ satisfying $\chi_f(G)> t-1$ contains a rather big clique, namely $K_{\lfloor (t+1)/2\rfloor}$, as an odd-minor. If we cannot rely on fractional chromatic number as a lower bound to disprove (Odd) Hadwiger's conjecture, maybe we should consider lower bounds that work orthogonally to the fractional chromatic number, in the sense that they can be large on graphs with small fractional chromatic number. The most well-explored such bounds are the \emph{topological lower bounds} on the chromatic number. The probably most famous usage of topology to determine the chromatic number of a graph is the proof of Kneser's conjecture by Lov\'{a}sz~\cite{lovasz}, showing that the chromatic number of the Kneser graph\footnote{For a definition of Kneser graphs, see Definition~\ref{def:kneserschrijver}.} $K(n,k)$ with $n \ge 2k$ equals $n-2k+2$. This is much larger than the fractional chromatic number of these graphs, which equals $n/k$. 

In 2003, Matou\v{s}ek and Ziegler~\cite{matousekziegler} generalized the method for lower-bounding the chromatic number of Kneser graphs employed by Lov\'{a}sz to arbitrary graphs $G$ via topological parameters based on the so-called \emph{box complexes} $\mathsf{B}(G)$ and $\mathsf{B}_0(G)$. These are certain abstract simplicial complexes associated with $G$, as follows.

\begin{definition}[Box complexes, cf.~\cite{matousekziegler,danesh}] Let $G$ be a graph. The vertex-set of both $B(G)$ and $B_0(G)$ is $V(G)\times \{1,2\}$. A subset $X\subseteq V(G)\times \{1,2\}$ forms a face of $\mathsf{B}_0(G)$ if and only if the two projections $X_1:=\{x \in V(G)|(x,1) \in X\}$ and $X_2:=\{x \in V(G)|(x,2) \in X\}$ are complete to each other in $G$, i.e., form the two sides of a complete bipartite subgraph of $G$ (the sides are allowed to be empty). Similarly, a subset $X\subseteq V(G)\times \{1,2\}$ forms a face of $\mathsf{B}(G)$ if it forms a face of $\mathsf{B}_0(G)$ and additionally both the projections $X_1, X_2$ of $X$ have at least one common neighbor in $G$.
\end{definition}

With every abstract simplicial complex $K=(V,\mathcal{F})$, one can naturally associate a topological space, called its \emph{geometric realization}. One way to define this space is to consider a geometric representation of $K$ in $\mathbb{R}^d$ for some sufficiently large\footnote{Such a representation can be obtained, for instance, by mapping $V$ to the set of unit vectors $\{e_v|v \in V\}$ in $\mathbb{R}^V$ and considering the collection of simplices that is obtained by assigning to each face $F \in \mathcal{F}$ the simplex $\mathrm{conv}\{e_v|v \in F\}$.} $d$ and to take the subspace-topology induced on this geometric representation by $\mathbb{R}^d$ with its natural topology. This allows to define various topological parameters on simplical complexes, including the box complexes $\mathsf{B}(G), \mathsf{B}_0(G)$. Via the Borsuk-Ulam theorem on coverings of high-dimensional spheres and the relation of graph homomorphisms to so-called $\mathbb{Z}_2$-maps between their associated box-complexes, several of those parameters can be used to give lower bounds on the chromatic number $\chi(G)$.

\begin{figure}[H]
    \centering
    \includegraphics[scale=0.55]{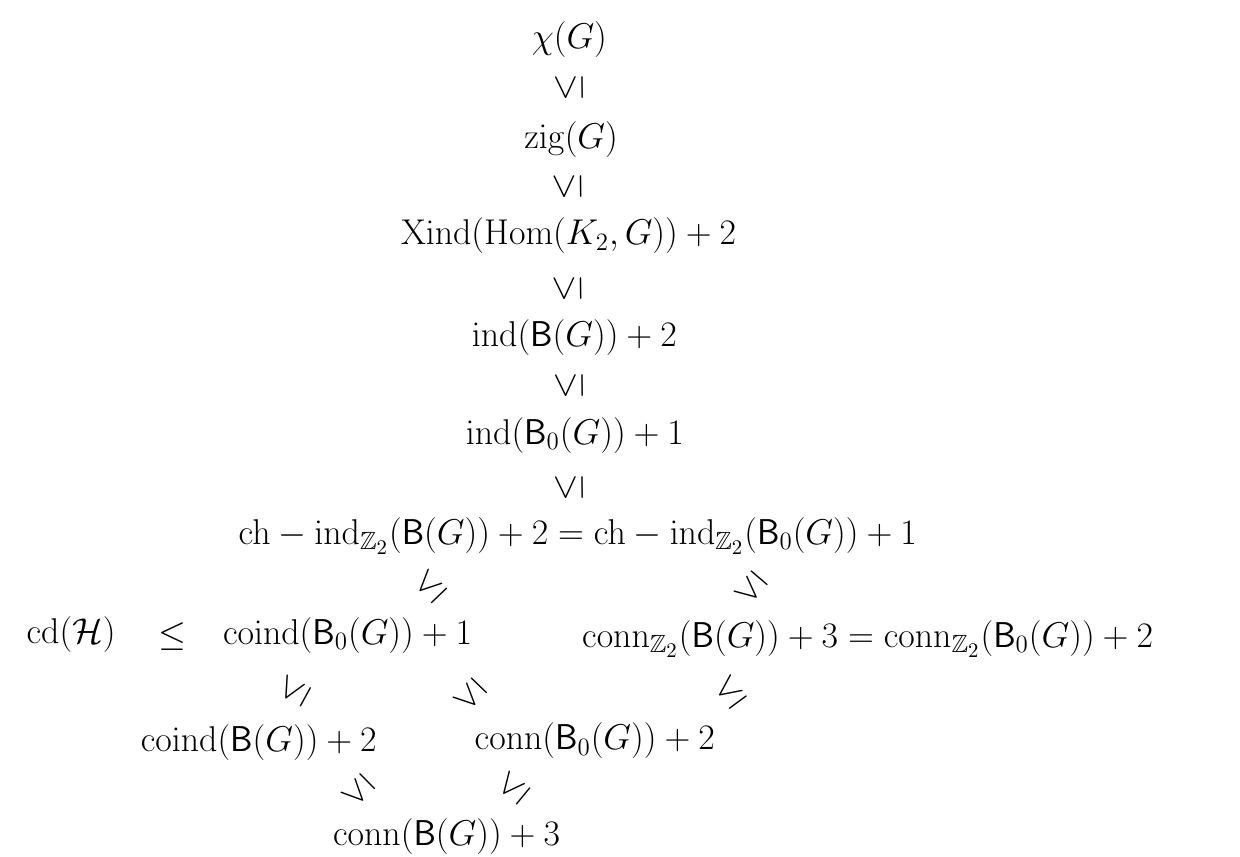}
    \caption{Illustration of the topological lower bounds on the chromatic number of a graph and their relations.}\label{fig:hierarchy}
\end{figure}

In the well-written and comprehensive surveys on this topic by Matou\v{s}ek and Ziegler~\cite{matousekziegler} from 2003 and by Daneshpajou and Meunier~\cite{danesh} from 2023, these parameters are compared to each other and put into a hierarchy. In Figure~\ref{fig:hierarchy} that is emulated from the paper by Daneshpajou and Meunier, we display the currently known topological lower bounds on the chromatic number and their relations. 
\medskip
\paragraph{\textbf{Combinatorial parameters.}} Since the focus of the paper at hand is combinatorial, and since it will not be necessary to understand our results or proofs, we refrain from giving the detailed definitions of all the parameters displayed in the figure, and instead focus on the two purely combinatorial ones, namely $\zig(G)$ and $\mathrm{cd}(\mathcal{H})$. Detailed explanations of all parameters and background can be found in~\cite{danesh}. 

\begin{definition}[Zig-Zag-number, cf.~\cite{danesh}, Section~4.1]
Let $c:V(G)\rightarrow \mathbb{N}$ be a proper coloring of a graph $G$. A \emph{zigzag} of length $k$ in the properly colored graph $(G,c)$ is defined as a sequence of vertices $z_1,\ldots,z_k\in V(G)$ such that $c(z_1)<\cdots<c(z_k)$ and such that the sets of vertices $\{z_i|i\text{ odd}\}$ and $\{z_i |i \text{ even}\}$ are complete to each other in $G$, i.e., form the two sides of a complete bipartite subgraph. 

\bigskip
The \emph{zig-zag-number} $\zig(G)$ of a graph is then defined as the minimum, over all proper colorings $c:V(G)\rightarrow \mathbb{N}$ of $G$, of the maximum length of a zigzag in $(G,c)$. 
\end{definition}
For most of the parameters in Figure~\ref{fig:hierarchy} the proof that they provide a lower bound on $\chi(G)$ relies on topology, and in most cases on the Borsuk-Ulam theorem. However, one does not need topology to see that $\zig(G)\le \chi(G)$ for every graph $G$, since the latter follows directly by definition of a zigzag. Furthermore, note that $\zig(G)$ is the strongest of all the lower bounds in Figure~\ref{fig:hierarchy}. Thus, $\zig(G)$ provides a combinatorial tool to simultaneously bound all the known topological parameters that provide lower bounds on $\chi(G)$. 
\begin{definition}[$2$-colorability defect, Kneser representation, cf.~\cite{matousekziegler}, page 8--9 and~\cite{danesh}, Section~4.1]
Let $\mathcal{H}=(V,E)$ be a hypergraph. The \emph{Kneser graph} $\mathrm{KG}(\mathcal{H})$ induced by $\mathcal{H}$ is the graph with vertex-set $E$ in which two hyperedges $e_1, e_2$ are adjacent if and only if $e_1 \cap e_2=\emptyset$. If $G$ is a graph, we say that a hypergraph $\mathcal{H}$ is a \emph{Kneser-representation} of $G$ if $\mathrm{KG}(\mathcal{H})$ is isomorphic to $G$. 

\bigskip
The $2$-colorability defect of a hypergraph $\mathcal{H}=(V,E)$ is defined as\footnote{Recall that a hypergraph is said to be $2$-colorable if it is possible to assign one of two colors to each vertex such that no hyperedge is fully contained in one color class.}
$$\mathrm{cd}(\mathcal{H}):=\min\{|U| : U \subseteq V\text{ such that }(V\setminus U, \{e\in E: e\cap U=\emptyset\})\text{ is }2\text{-colorable}\}.$$
\end{definition}
It is worth mentioning and not hard to prove that every graph admits at least one (and in fact, many) Kneser-representations. Therefore, the following result, a wide-ranging generalization of Lov\'{a}sz' lower bound on the chromatic number of Kneser graphs due to Dol'nikov~\cite{dolnikov} and K\v{r}\'{i}\v{z}~\cite{kriz}, can be used as a lower bound for the chromatic number of any graph $G$. 

\begin{theorem}[Dol'nikov~\cite{dolnikov}, K\v{r}\'{i}\v{z}~\cite{kriz}]
Let $G$ be a graph with a Kneser-representation $\mathcal{H}$. Then $\chi(G)\ge \mathrm{cd}(\mathcal{H})$. 
\end{theorem}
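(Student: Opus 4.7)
The plan is to adapt the classical Dol'nikov--K\v{r}\'{\i}\v{z} topological argument via the Borsuk--Ulam theorem. Let $t := \mathrm{cd}(\mathcal{H})$ and suppose, for contradiction, that $\mathrm{KG}(\mathcal{H}) \cong G$ admits a proper coloring $c \colon E \to \{1, \ldots, t-1\}$. I will build a continuous odd map $S^{t-1} \to \mathbb{R}^{t-1}$ and apply Borsuk--Ulam to locate a pair of disjoint, monochromatic hyperedges, which is the desired contradiction.

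First, I embed $V$ as points $\{p_v\}_{v \in V}$ in general position on $S^{t-1}$, so that no great $(t-2)$-sphere contains more than $t-1$ of them. For $x \in S^{t-1}$, let $H(x) := \{y \in S^{t-1} : \langle x, y\rangle > 0\}$ denote the open hemisphere with pole $x$, and set $U(x) := \{v \in V : p_v \notin H(x) \cup H(-x)\}$. General position forces $|U(x)| \leq t-1 < t$, so by the definition of $\mathrm{cd}(\mathcal{H})$ the restricted hypergraph $(V \setminus U(x), \{e \in E : e \cap U(x) = \emptyset\})$ is \emph{not} $2$-colorable. Since the partition of $V \setminus U(x)$ into $\{v : p_v \in H(x)\}$ and $\{v : p_v \in H(-x)\}$ is a candidate $2$-coloring, there must exist some $e \in E$ with $\{p_v : v \in e\} \subseteq H(x)$ or $\{p_v : v \in e\} \subseteq H(-x)$.

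Now, for each color $i \in \{1, \ldots, t-1\}$ with $c^{-1}(i) \neq \emptyset$, I define $g_i \colon S^{t-1} \to \mathbb{R}$ by $g_i(x) := \max_{e \in c^{-1}(i)} \min_{v \in e} \langle p_v, x\rangle$ (with the convention $g_i \equiv -\infty$ for unused colors), so that $g_i(x) > 0$ iff some hyperedge of color $i$ sits inside $H(x)$. The map $f = (f_1, \ldots, f_{t-1}) \colon S^{t-1} \to \mathbb{R}^{t-1}$ with $f_i(x) := \max(g_i(x), 0) - \max(g_i(-x), 0)$ is continuous and odd, so by Borsuk--Ulam some $x^* \in S^{t-1}$ satisfies $f(x^*) = 0$. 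The observation from the previous paragraph applied at $x = x^*$ produces a color $i_0$ with $g_{i_0}(x^*) > 0$ or $g_{i_0}(-x^*) > 0$, and the vanishing of $f_{i_0}$ forces \emph{both} values to be strictly positive. Consequently there exist hyperedges $e_1, e_2$ of color $i_0$ with $e_1 \subseteq H(x^*)$ and $e_2 \subseteq H(-x^*)$; since $H(x^*) \cap H(-x^*) = \emptyset$, we have $e_1 \cap e_2 = \emptyset$ and so $e_1, e_2$ are adjacent in $\mathrm{KG}(\mathcal{H})$ while sharing a color---contradicting the properness of $c$.

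The main technical hurdle is arranging the general-position embedding so that $|U(x)| \leq t-1$ holds \emph{uniformly} in $x \in S^{t-1}$. This is classical (any $t$ generic vectors in $\mathbb{R}^t$ are linearly independent, hence cannot simultaneously lie on a single hyperplane through the origin), but must be pinned down first; once it is, continuity of $f$ and the Borsuk--Ulam step are entirely routine.
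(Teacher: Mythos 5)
The paper does not prove this theorem; it is stated with citations to Dol'nikov and K\v{r}\'{\i}\v{z} and used as a black box, so there is no internal proof to compare against. Your argument is, however, a correct rendition of the standard topological proof of the Dol'nikov--K\v{r}\'{\i}\v{z} bound. The general-position step is fine: placing the normalized moment curve points $\gamma(s_v) = (1, s_v, s_v^2, \ldots, s_v^{t-1})/\lVert \cdot \rVert$ on $S^{t-1}$ for distinct parameters $s_v$ makes every $t$ of them linearly independent by Vandermonde, so $\lvert U(x)\rvert \le t-1$ uniformly in $x$. The functions $g_i$ are continuous as maxima of minima of linear functionals, $f$ is odd by construction, Borsuk--Ulam applies to $f\colon S^{t-1} \to \RR^{t-1}$, and the zero $x^*$ together with the non-$2$-colorability of $\mathcal{H}-U(x^*)$ produces two disjoint hyperedges of the same color, contradicting properness of $c$. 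One cosmetic point: you should either drop the coordinates for unused colors (reducing the codomain dimension, which only strengthens the Borsuk--Ulam conclusion) or, as you do, set $f_i \equiv 0$ there; either is fine, but the $g_i \equiv -\infty$ convention should be noted to still give a continuous $f_i$ after clamping at $0$, which it does. Overall the proof is complete and correct.
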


\medskip
\paragraph{\textbf{Our results.}} In this paper, we investigate how the hierarchy of topological lower bounds on the chromatic number interacts with the presence of clique minors, and thus evaluate the potential these lower bounds provide for attacking Hadwiger's conjecture or its odd-minor variant. The investigation of this topic was initiated in previous work of Simonyi and Zsb\'{a}n~\cite{simonyi} and even earlier by Simonyi and Tardos~\cite{simonyitardos}, where they investigate the relaxations of Hadwiger's conjecture and its odd-minor variant obtained by putting one of the topological lower bounds on the chromatic number in place of $\chi(G)$. 

Regarding Hadwiger's conjecture, Simonyi and Tardos (see Corollary~25 in~\cite{simonyitardos}) observed that known results from the literature, in particular the so-called \emph{$K_{\ell,m}$-theorem} and the stronger \emph{Zig-Zag-theorem} (relating to $\zig(G)$ defined above), imply that every graph for which one of the topological lower bounds on the chromatic number attains a value of at least $t$, contains $K_{\lfloor t/2\rfloor, \lceil t/2\rceil}$ as a subgraph and therefore $K_{\lfloor t/2\rfloor+1}$ as a minor. Due to this observation, Simonyi and Zsb\'{a}n~\cite{simonyi} focused on the stronger Odd Hadwiger's conjecture and stated that ``Our main concern here is that the Odd Hadwiger's conjecture looks much more difficult in this respect''. Indeed, while the presence of a large clique minor can be directly deduced from the presence of a large balanced complete bipartite subgraph, the latter does not at all imply the existence of a large odd clique minor---as mentioned before, bipartite graphs are odd-$K_3$-minor-free. In fact, for the right choice of parameters, graphs like the Kneser graphs $K(n,k)$, for which the topological lower bounds on the chromatic number are tight, can exhibit arbitrarily large odd girth, meaning that it will not be possible to find a bounded-size subgraph that provides a large odd clique minor. This led Simonyi and Zsb\'{a}n to explicitely pose the following open problem in~\cite{simonyi}, which remained unsolved thus far:

\begin{problem}[Problem in~Section~5 of~\cite{simonyi}]
Does there exist an absolute constant $c>0$ for which the following is true? If a graph $G$ satisfies $\zig(G)\ge t$, then $G$ contains an odd complete minor on at least $ct$ vertices.
\end{problem}

As the main result of this paper, we solve the above problem in the affirmative, with the constant $c=\frac{1}{2}$:

\begin{theorem}\label{thm:main}
Let $t$ be a positive integer. If a graph $G$ satisfies $\zig(G)\ge t$, then $G$ contains $K_{\lfloor t/2\rfloor+1}$ as an odd-minor.
\end{theorem}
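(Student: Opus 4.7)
The plan is to fix a proper coloring $c$ of $G$ that witnesses $\zig(G)\ge t$ together with a zigzag $z_1,\ldots,z_t$ in $(G,c)$, and then to construct from these data $\lfloor t/2\rfloor+1$ vertex-disjoint bipartite-connected subgraphs of $G$ together with a global bipartition of their union that certifies $K_{\lfloor t/2\rfloor+1}$ as an odd-minor.

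Before attempting the construction, I would record the following obstruction: the $t$ zigzag vertices $\{z_1,\ldots,z_t\}$ alone cannot suffice, since the subgraph they span is bipartite (it contains a $K_{\lceil t/2\rceil,\lfloor t/2\rfloor}$) and bipartite graphs admit no odd-$K_h$-minor for $h\ge 3$. Hence each branch set must be enlarged using vertices of $G$ outside the zigzag. The natural gadget is the \emph{Kempe component}: for any two colors $\alpha,\beta$, each connected component of $G[c^{-1}(\alpha)\cup c^{-1}(\beta)]$ is bipartite-connected with a canonical bipartition given by the two color classes.

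A candidate construction is to pair the zigzag into consecutive edges $(z_{2i-1},z_{2i})$ and let $V_i$ be the Kempe component of $G[c^{-1}(c_{2i-1})\cup c^{-1}(c_{2i})]$ containing the edge $z_{2i-1}z_{2i}$, endowed with its natural bipartition. Vertex-disjointness is immediate because distinct pairs use disjoint color pairs. The $(\lfloor t/2\rfloor+1)$-th branch set would be obtained either from the unpaired zigzag vertex (if $t$ is odd) or, when $t$ is even, from an additional carefully chosen component or extra singleton vertex supplied by the coloring $c$.

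The main obstacle, precisely the one Simonyi and Zsb\'{a}n isolated as the reason the odd-minor version is much harder than the ordinary-minor one, is to secure the required \emph{monochromatic} connecting edges between every pair of branch sets: under the natural color-bipartition, each zigzag edge $z_{2i-1}z_{2j}$ with $i\ne j$ is bichromatic, so the complete bipartite structure of the zigzag does not directly provide the A-to-A or B-to-B connections we need. I expect the decisive step to be an extremality argument: choose $c$ to minimize an appropriate invariant (for instance, the lexicographic profile of color class sizes, or the number of Kempe components supported on pairs of zigzag colors) subject to $\zig(G,c)=t$, and then argue by contradiction. If the monochromatic connecting edge between some pair $(V_i,V_j)$ were missing, then swapping colors on a suitable sub-component of $V_i$ or $V_j$ would yield a new proper coloring $c'$ whose longest zigzag has length strictly less than $t$, contradicting $\zig(G)\ge t$. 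Formalizing this Kempe-swap step, and orienting the global bipartition so that all $\binom{\lfloor t/2\rfloor+1}{2}$ connecting edges are simultaneously monochromatic, are the two ingredients I anticipate to require the most work.
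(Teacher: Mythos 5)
Your proposal correctly identifies the crux of the problem---under the natural colour-bipartition of a branch set, the zigzag edges $z_az_b$ with $a$ odd and $b$ even join vertices of different colours, so they do not automatically furnish the monochromatic connecting edges an odd $K_k$-expansion needs. However, the resolution you sketch (take an extremal witness colouring and derive a contradiction via Kempe swaps) is not carried out and, more importantly, is not the mechanism that makes the theorem go through. The difficulty with the Kempe-swap plan is that after recolouring a Kempe sub-component, the colour order along the original zigzag changes and entirely new zigzags can appear; controlling the longest zigzag under such a swap is precisely as hard as the original problem, and you give no indication of how the extremal choice of $c$ would prevent this. In short, the argument leaves its key step entirely to faith.

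The actual proof inverts your logic. Rather than starting from an arbitrary (or extremal) proper colouring and trying to repair it, it first fixes a structural vertex partition of $G$ into parts $X_1,\ldots,X_n$ with a very specific property (Lemma~\ref{lemma:decompose}, drawn from~\cite{steiner}): each $G[X_i]$ is bipartite and connected, and for $i<j$ every vertex of $X_j$ with a neighbour in $X_i$ in fact has neighbours on \emph{both} sides of the bipartition of $G[X_i]$. It then builds a tailored proper colouring $c$ by giving the two sides of $G[X_i]$ colours $2i-1$ and $2i$, and applies the hypothesis $\zig(G)\ge t$ to this one colouring. Because $\zig$ is defined as a minimum over all proper colourings, the long zigzag is \emph{guaranteed} in this particular $c$, and because $c$ was built from the structural decomposition, the two-sided-neighbour property automatically supplies a monochromatic connecting edge whenever any connecting edge between two parts exists (Claim~($\ast$)). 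The branch sets are then spanning trees $T_i$ of the parts $G[X_i]$, glued together via a matching $M$ extracted from the zigzag, with a short case analysis on $|M|$. So what your proposal is missing is exactly the step of choosing the colouring \emph{after} having fixed a decomposition with the right local bipartite structure, which is what renders the monochromatic-edge requirement automatic rather than something to be fought for by recolouring.
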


As one can see from Figure~\ref{fig:hierarchy}, if any one of the known topological lower bounds for $\chi(G)$ attains a value of at least $t$, also $\zig(G)$ is of value at least $t$, and thus the statement of Theorem~\ref{thm:main} yields the presence of large odd clique minors in all graphs for which at least one of the topological parameters displayed in the figure gets large.

In the results mentioned so far, both for the topological relaxation of Hadwiger's and of the Odd Hadwiger's conjecture, we can only find a clique minor of half the size that would be guaranteed if the conjectures were true. Motivated by this gap, in our second main result, we show that the precise statement of Hadwiger's conjecture can be recovered when chromatic number is replaced by one of the lower bounds in Figure~\ref{fig:hierarchy}, namely $\mathrm{cd}(\mathcal{H})$:

\begin{theorem}\label{thm:dolnikov}
Let $t$ be a positive integer. If a graph $G$ admits a Kneser representation $\mathcal{H}$ such that $\mathrm{cd}(\mathcal{H})\ge t$, then $G$ contains $K_t$ as a minor.
\end{theorem}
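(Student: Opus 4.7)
The plan is to give a direct combinatorial construction of the branch sets of the desired $K_t$-minor in $G=\mathrm{KG}(\mathcal{H})$, exploiting the combinatorial reformulation of the hypothesis: $\mathrm{cd}(\mathcal{H})\ge t$ is equivalent to saying that for every pair of disjoint $\mathcal{H}$-independent sets $A,B\subseteq V$ (subsets of $V$ containing no hyperedge of $\mathcal{H}$), we have $|V|-|A|-|B|\ge t$. First I would fix a pair $(A^*,B^*)$ of disjoint independent sets maximizing $|A^*|+|B^*|$ and set $U^*:=V\setminus(A^*\cup B^*)$, so that $|U^*|\ge t$. By maximality, for each $u\in U^*$ there exist hyperedges $a_u,b_u\in E$ containing $u$ with $a_u\setminus\{u\}\subseteq A^*$ and $b_u\setminus\{u\}\subseteq B^*$ (otherwise $u$ could be moved into $A^*$ or $B^*$).

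The plan is then to use (a fixed $t$-subset of) $U^*$ to index the branch sets $B_u$, each initially containing $a_u$ and $b_u$. Cross-adjacency between distinct branches comes almost for free: for $u\ne u'$, the hyperedge $a_u\subseteq A^*\cup\{u\}$ is disjoint from $b_{u'}\subseteq B^*\cup\{u'\}$ (because $A^*\cap B^*=\emptyset$ and $u,u'\in U^*$), so they are adjacent in $G$ and witness the needed edge between $B_u$ and $B_{u'}$. The subtler point is internal connectedness: since $a_u\cap b_u=\{u\}$, the hyperedges $a_u$ and $b_u$ are not adjacent in $G$, and so $B_u$ must contain a \emph{bridge} hyperedge disjoint from both.

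The main obstacle will be producing these bridges while keeping the $B_u$'s pairwise disjoint as vertex subsets of $G$. My plan is to iterate the $\mathrm{cd}$-condition on a suitably restricted sub-hypergraph tailored to each pair $(a_u,b_u)$: a refined 2-coloring argument, or an induction on $t$ applied to $\mathcal{H}$ restricted to $V\setminus(a_u\cup b_u)$, should yield a bridge hyperedge disjoint from $a_u\cup b_u$. I anticipate that coordinating all $t$ local bridges simultaneously, so that no hyperedge is reused across branch sets and every $B_u$ is fully connected in $G$, will be the technically hardest step and may require an auxiliary rainbow-type selection argument or a more careful choice of the extremal pair $(A^*,B^*)$ at the outset.
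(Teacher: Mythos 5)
Your setup is essentially the same as the paper's: your extremal pair $(A^*,B^*)$ of disjoint independent sets is exactly a smallest $U=V\setminus(A^*\cup B^*)$ making $\mathcal{H}-U$ $2$-colorable, and your observation that maximality yields hyperedges $a_u\subseteq A^*\cup\{u\}$ and $b_u\subseteq B^*\cup\{u\}$ through each $u\in U^*$ is precisely the paper's Claim~1. Your cross-adjacency observation ($a_u\cap b_{u'}=\emptyset$ for $u\neq u'$) is also the same. So the framework is right, and you have correctly isolated the one real difficulty: $a_u$ and $b_u$ meet in $\{u\}$, so they are not adjacent in $G$ and cannot by themselves form a connected branch set.

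However, the bridge strategy you propose to fix this is a dead end, not merely a technical challenge. There need not exist \emph{any} hyperedge disjoint from $a_u\cup b_u$. Concretely, take $\mathcal{H}=\binom{[n]}{k}$ (so $G=K(n,k)$, $\mathrm{cd}(\mathcal{H})=n-2k+2$), with $A^*=\{1,\dots,k-1\}$, $B^*=\{k,\dots,2k-2\}$, $U^*=\{2k-1,\dots,n\}$. Then $a_u=A^*\cup\{u\}$, $b_u=B^*\cup\{u\}$, and a bridge would have to be a $k$-set inside $U^*\setminus\{u\}$, which has size $n-2k+1$. For $n<3k-1$ this is impossible, even though $\mathrm{cd}(\mathcal{H})\ge t$ can be arbitrarily large in that range. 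No amount of rainbow selection or induction on restricted sub-hypergraphs can produce bridges that do not exist.

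The paper's resolution is not to keep $a_u$ and $b_u$ in the same branch set at all. Split $U$ into $U_1=\{u:\{u\}\in E\}$ (these give singleton branch sets, trivially connected) and $U_2=U\setminus U_1$, and for $u\in U_2$ write $e(u,1)=a_u$, $e(u,2)=b_u$. The hyperedges $\{e(u,i):u\in U_2,\ i\in\{1,2\}\}$ induce in $G$ a copy of $K_{|U_2|,|U_2|}^\ast$ (complete bipartite minus a perfect matching, the deleted matching being the non-edges $e(u,1)e(u,2)$), and together with the singletons one gets $K_{|U_1|}+K_{|U_2|,|U_2|}^\ast$ as a subgraph. Now order $U_2$ cyclically as $u_1,\dots,u_m$ and take branch sets $\{e(u_j,1),e(u_{j+1},2)\}$: since $u_j\neq u_{j+1}$ these two hyperedges are disjoint, hence adjacent, so each branch set is connected, and one checks the contracted graph is $K_m$ (for $m\ge 3$; $m\le 2$ is trivial). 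In short, the missing idea is to re-pair the two hyperedges across different elements of $U$ via a cyclic shift, rather than to search for a third hyperedge bridging $a_u$ and $b_u$. This is exactly the step your proposal does not supply, and it is where the proof actually lives.
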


In particular, this shows that Hadwiger's conjecture is true for any graph for which the Dol'nikov-K\v{r}\'{i}\v{z} bound provides a tight estimate of its chromatic number (such as Kneser graphs, for example).

It would be nice to also get a similar tight result for other parameters in Figure~\ref{fig:hierarchy}, or to strengthen Theorem~\ref{thm:dolnikov} to odd-minors. While we did not succeed at this task in general, we could at least prove that the precise version of Odd Hadwiger's conjecture does hold for the most prominent examples of graphs for which the topological lower bounds are tight, namely for all Kneser and all Schrijver graphs.

\begin{definition}[Kneser graph, Schrijver graph, cf.~\cite{schrijver}]\label{def:kneserschrijver}
Let $n, k$ be positive integers with $n \ge 2k$. The Kneser graph $K(n,k)$ is the graph with vertex-set $\binom{[n]}{k}$ in which two $k$-subsets of $[n]=\{1,\ldots,n\}$ are made adjacent iff they are disjoint. The Schrijver graph $S(n,k)$ is the subgraph of $K(n,k)$ induced by all those $k$-subsets of $[n]$ that are \emph{cyclically stable}, i.e. that do not contain two consecutive elements in the natural cyclical ordering of $[n]$. 
\end{definition} 
Following Lov\'{a}sz proof of Kneser's conjecture that $\chi(K(n,k))=n-2k+2$, Schrijver~\cite{schrijver} proved that the same tight estimate holds for all Schrijver graphs, i.e., $\chi(S(n,k))=n-2k+2$ and that moreover, $S(n,k)$ is minimal with this property, in that the chromatic number drops if any vertex is removed. The following result proves the Odd Hadwiger's conjecture for all Kneser and Schrijver graphs. 
\begin{theorem}\label{thm:schrijver}
    For every choice of positive integers $n,k$ such that $n \ge 2k$, the Schrijver graph $S(n,k)$ (and thus also the Kneser graph $K(n,k)$) contains $K_{n-2k+2}$ as an odd-minor. 
\end{theorem}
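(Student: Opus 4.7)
My plan is to prove Theorem~\ref{thm:schrijver} by explicitly constructing $K_t$ as an odd-minor of $S(n,k)$, where $t = n-2k+2$. Concretely, I aim to exhibit $t$ pairwise vertex-disjoint branch sets $V_1, \ldots, V_t \subseteq V(S(n,k))$, equipped with 2-colorings $\phi_i : V_i \to \{0,1\}$, such that within each $V_i$ the bichromatic edges form a connected spanning subgraph, and for every pair $i \neq j$ there is at least one monochromatic edge between $V_i$ and $V_j$ in $S(n,k)$.

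The natural starting point is the family of ``canonical'' cyclically stable $k$-sets $U_i = \{i, i+2, \ldots, i+2k-2\}$ for $i = 1, \ldots, t$. A direct computation yields that $U_i \cap U_j = \emptyset$ if and only if $|i-j| \notin \{0, \pm 2, \ldots, \pm (2k-2)\}\pmod n$. The first attempt is to take $V_i = \{U_i, W_i\}$ for a second stable $k$-set $W_i$ disjoint from $U_i$, with coloring $\phi_i(U_i) = 0$, $\phi_i(W_i) = 1$; then the monochromatic edge between $V_i$ and $V_j$ is provided by $U_iU_j$ whenever these are disjoint, and otherwise by $W_iW_j$. I have verified that this two-vertex construction works for $(n,k) \in \{(6,2), (7,2)\}$ via the uniform cyclic shift $W_i = \{i+1, i+3, \ldots, i+2k-3, i+2k\} \pmod n$. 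However, a brute-force search shows that no two-vertex construction exists for $(n,k) = (8,3)$, so in general larger branch sets must be used.

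I therefore plan to proceed by induction on $t$, equivalently on $n$ with $k$ fixed. The base case $t=2$ is immediate since $S(2k,k) = K_2$. For the inductive step, the induced-subgraph inclusion $S(n-1,k) \subseteq S(n,k)$ yields an odd $K_{t-1}$-minor in $S(n,k)$ with branch sets $V_1, \ldots, V_{t-1}$, and I would adjoin a new branch set $V_t$ built entirely from cyclically stable $k$-subsets of $[n]$ that contain the ``new'' element $n$ (i.e., vertices present in $V(S(n,k)) \setminus V(S(n-1,k))$). The new branch set $V_t$ will be a path or small tree with alternating 2-coloring, designed so that vertices of the appropriate color in $V_t$ are adjacent to correspondingly colored vertices in each of the previous branch sets $V_i$. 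A concrete verification of this scheme in the case $(n,k) = (8,3)$ suggests that taking $V_1, V_2$ as singletons and $V_3, V_4$ as $4$-vertex paths with $0$--$1$--$0$--$1$ coloring produces the required odd $K_4$-minor.

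The main obstacle is to maintain a structural invariant across the induction: each existing branch set $V_i$ must possess, for each color $c \in \{0,1\}$, a vertex of color $c$ whose neighborhood outside $V_1 \cup \cdots \cup V_{t-1}$ is sufficiently rich to supply a recruit for $V_t$ of the right color. Executing this may require a careful design of the base case together with a case analysis depending on the parity of $n$ and on the ratio $n/k$. An alternative strategy that I would explore if the direct induction proves too delicate is to sidestep induction and instead leverage Gale's lemma together with the topological Borsuk--Ulam framework underlying Schrijver's proof of $\chi(S(n,k)) = n-2k+2$, extracting a canonical combinatorial configuration (a family of hemispherical witnesses on $S^{n-2k}$) from which the odd $K_t$-minor can be read off directly.
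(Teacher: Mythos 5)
Your plan and the paper's proof take genuinely different routes, but your proposal is not a complete proof: the central inductive step is left unresolved and you explicitly flag it as such. You correctly identify that a naive two-vertex branch-set construction fails already at $(n,k)=(8,3)$, and you correctly note the base case $S(2k,k)=K_2$; however, the entire burden of the argument lies in the ``structural invariant'' you say would need to be maintained across the induction, and you neither formulate it precisely nor prove it can be propagated. This is not a minor technicality. The difficulty is that the induction hypothesis only hands you an odd $K_{t-1}$-expansion inside $S(n-1,k)$, with no control over which vertices of which colors lie in each branch set, so there is no guarantee that the new branch set $V_t$ (built from sets containing $n$) can be wired monochromatically to every $V_i$. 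One would have to strengthen the induction hypothesis substantially, and your proposal does not say how. The fallback via Gale's lemma is only named, not sketched.

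By contrast, the paper avoids induction entirely and gives a single explicit construction that works uniformly for all $n\ge 2k$: it takes the vertex set $\mathcal{A}$ of $S(2k-1,k-1)$, which is isomorphic to the odd cycle $C_{2k-1}$, colors each $A\in\mathcal{A}$ by $c(A)\in\{1,2\}$ according to whether $1\in A$ or $2k-1\in A$ (with $A_0=\{2,4,\dots,2k-2\}$ getting color $1$), and for each $i=1,\dots,n-2k$ builds a branch set $\mathcal{A}_i=\{A\cup\{i+c(A)+(2k-2)\}:A\in\mathcal{A}\}$. The induced subgraph on $\mathcal{A}_i$ turns out to be a path on $2k-1$ vertices (the cycle minus the edge $A_0A_1$), properly $2$-colored by $c$, and the remaining two branch sets are singletons $\{1,3,\dots,2k-1\}$ and $\{2,4,\dots,2k-2,n\}$. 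The monochromatic connecting edges between any two branch sets can then be exhibited by hand. This direct construction sidesteps exactly the invariant-bookkeeping your induction would require. If you want to salvage the inductive approach, you would at minimum need to prove a lemma of the form: every branch set, for each color, contains a vertex avoiding a prescribed window of $2k-1$ consecutive residues mod $n$ — and verify this survives one more step. As written, the proposal identifies a reasonable strategy but does not constitute a proof.
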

We remark that under the restrictive assumption that $n-2k+2=t$ is fixed as a constant while $n$ and $k$ are sufficiently large as a function of $t$, the result in Theorem~\ref{thm:schrijver} was already proved by Simonyi and Zsb\'{a}n~\cite{simonyi}. Our result is more general in that no conditions on $n$ and $k$ are required for Theorem~\ref{thm:schrijver} to hold. 

\section{Proofs of Theorems~\ref{thm:main} and~\ref{thm:dolnikov}}

In this section, we present the proofs of Theorem~\ref{thm:main} and Theorem~\ref{thm:dolnikov}. The proof of Theorem~\ref{thm:main} builds on the following key lemma from~\cite{steiner}. The statement given here is slightly stronger than the statement of Lemma~4 in~\cite{steiner}, but looking at the proof of Lemma~4 in~\cite{steiner} one readily checks that it does in fact yield Lemma~\ref{lemma:decompose} as stated here.

\begin{lemma}[cf.~Lemma 4 in~\cite{steiner}]\label{lemma:decompose}
Let $G$ be a graph. There exists $n \in \mathbb{N}$ and a partition of $V(G)$ into $n$ non-empty sets $X_1,\ldots,X_n$ such that the following holds:
\begin{enumerate}
    \item For every $1\le i \le n$, the graph $G[X_i]$ is bipartite and connected.
    \item For every $1\le i<j\le n$, and every vertex $v \in X_j$, the following holds. Either $v$ has no neighbors in $X_i$, or it has two distinct neighbors $u_1, u_2$ where $u_1$ and $u_2$ lie on different sides of the bipartition of $G[X_i]$.
\end{enumerate}
\end{lemma}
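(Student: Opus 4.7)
The plan is to prove the lemma by induction on $|V(G)|$, greedily constructing the first piece $X_1$ via a saturation procedure and then invoking the inductive hypothesis on $G-X_1$. The base case $|V(G)|=1$ is immediate with $X_1=V(G)$.

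\textbf{Construction of $X_1$.} For the inductive step, I would initialize $X_1=\{v_0\}$ for an arbitrary vertex $v_0\in V(G)$ with the trivial bipartition $(A,B)=(\{v_0\},\emptyset)$. As long as there exists a vertex $u\in V(G)\setminus X_1$ with $N(u)\cap X_1$ non-empty and entirely contained in one side of the current bipartition, say $A$ (the case of $B$ being symmetric), add $u$ to the opposite side, updating $(A,B)$ to $(A,B\cup\{u\})$. Since $u$ has at least one neighbour in $A$ and none in $B$, the enlarged set induces a connected bipartite subgraph with this new bipartition. The procedure terminates because $|X_1|$ strictly increases at each step. By definition of the termination condition, every vertex $v\in V(G)\setminus X_1$ either has no neighbour in $X_1$ or has neighbours in both $A$ and $B$, which is precisely condition (2) for $i=1$.

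\textbf{Induction step.} If $X_1=V(G)$, we are done with $n=1$. Otherwise, apply the inductive hypothesis to $G-X_1$ (which has strictly fewer vertices) to obtain non-empty sets $X_2,\ldots,X_n$ satisfying both conditions relative to $G-X_1$. Because $G[X_i]=(G-X_1)[X_i]$ for $i\ge 2$ and condition (2) only concerns neighbours within the $X_i$'s, condition (1) transfers verbatim and condition (2) for pairs $2\le i<j\le n$ is inherited directly from the induction; combined with the closure property established for $X_1$, this yields the desired partition.

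\textbf{Main subtlety.} The only point requiring a moment's care is that the bipartition $(A,B)$ of $G[X_1]$ must be well-defined throughout the greedy process. This is automatic because a connected bipartite graph has a unique bipartition up to swapping the two sides, and each greedy step consistently extends this bipartition. I do not anticipate a serious obstacle: the proof is essentially an invariant-based analysis of a greedy saturation algorithm, and the resulting structure is tailored exactly to the closure condition (2) demanded by the lemma.
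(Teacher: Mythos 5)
Your proof is correct. The greedy saturation invariant is maintained properly: when a vertex $u$ outside $X_1$ has all its $X_1$-neighbors in one side, moving it to the opposite side keeps $G[X_1]$ connected and bipartite, and the bipartition you track is forced to be the unique one since $G[X_1]$ stays connected. At termination, every remaining vertex either has no neighbor in $X_1$ or has neighbors in both parts, which is exactly condition (2) for $i=1$; and since $X_2,\dots,X_n$ live entirely in $G-X_1$, both conditions for $2\le i<j$ are inherited unchanged from the inductive hypothesis. The paper itself does not present a proof of this lemma --- it defers to Lemma~4 of the cited reference \cite{steiner}, noting only that the proof there already gives the slightly stronger statement recorded here --- but your greedy-plus-induction argument is precisely the natural way to obtain it and is in the same spirit as that source.
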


To present the proof of Theorem~\ref{thm:main}, it will be convenient to work with the notion of \emph{odd $K_k$-expansions} from~\cite{geelen}, defined as follows.

\begin{definition}
A $K_k$-expansion is a graph $F$ that consists of the disjoint union of $k$ trees $T_1,\ldots,T_k$, together with exactly one edge joining every pair of two distinct trees. We say that $F$ is an \emph{odd} $K_k$-expansion, if there exists a color-assignment $c_{\text{tree}}:V(T_1)\cup \cdots \cup V(T_k)\rightarrow \{1,2\}$ such that for every $i=1,\ldots,k$, the restriction of $c_{\text{tree}}$ to $V(T_i)$ forms a proper coloring of the tree $T_i$, while every edge connecting two distinct trees is monochromatic with respect to $c_{\text{tree}}$. 
\end{definition}
The following fact is folklore, see also~\cite{geelen}:
\begin{observation}
    If $G$ is a graph and $k$ a positive integer, then $G$ contains $K_k$ as an odd-minor if and only if there exists a subgraph of $G$ that is an odd $K_k$-expansion. 
\end{observation}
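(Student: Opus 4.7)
The plan is to verify the two directions of the equivalence separately; both amount to unpacking the definitions of odd-minor and of odd $K_k$-expansion, with the color assignment $c_{\text{tree}}$ being dictated by the two sides of the relevant edge-cut.

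For the forward direction, I would start from an odd $K_k$-minor of $G$, given by a subgraph $G'\subseteq G$ and a vertex-partition $V(G')=A\cup B$ whose induced edge-cut $E_{\text{cut}}$, upon simultaneous contraction, yields $K_k$. I would first observe that the branch sets of this minor are exactly the connected components $V_1,\ldots,V_k$ of the spanning subgraph $(V(G'),E_{\text{cut}})$, since contracting the edges of $E_{\text{cut}}$ identifies precisely the vertices lying in a common such component. I would then pick, for each $i$, a spanning tree $T_i$ of $V_i$ consisting only of cut-edges, and select, for each pair $i\neq j$, a single edge of $G'$ between $V_i$ and $V_j$; such an edge exists because the contraction produces $K_k$, and it cannot lie in $E_{\text{cut}}$, as otherwise $V_i$ and $V_j$ would coincide as a single component of the cut-edge graph. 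Coloring each vertex of $T_1\cup\cdots\cup T_k$ by $1$ if it lies in $A$ and by $2$ if it lies in $B$, every tree edge becomes bichromatic (cut-edges cross $A,B$ by definition), while every inter-tree edge becomes monochromatic (it does not cross $A,B$), exhibiting the desired odd $K_k$-expansion inside $G$.

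For the reverse direction, I would take an odd $K_k$-expansion $F\subseteq G$ with trees $T_1,\ldots,T_k$, inter-tree edges, and $2$-coloring $c_{\text{tree}}$. Setting $G':=F$ and choosing the edge-cut induced by the partition $A:=c_{\text{tree}}^{-1}(1)$, $B:=c_{\text{tree}}^{-1}(2)$, the defining properties of $c_{\text{tree}}$ force the cut-edges of $G'$ to be exactly the tree edges. The components of the resulting cut-edge spanning subgraph of $F$ are therefore $V(T_1),\ldots,V(T_k)$, so the simultaneous contraction collapses each $V(T_i)$ to a single vertex, and the remaining monochromatic inter-tree edges supply exactly one edge between every pair of those $k$ vertices, yielding a graph isomorphic to $K_k$.

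Since the observation is explicitly flagged as folklore in the text, I do not anticipate any real obstacle; the only mildly subtle point is to ensure that, in the forward direction, the selected inter-component edges are genuinely non-cut-edges, and that, in the reverse direction, the cut-edges coincide exactly with the tree edges. Both facts follow immediately from the color conditions, but they are what makes the translation between the two definitions clean rather than merely suggestive.
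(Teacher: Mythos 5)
Your proof is correct, and it is exactly the translation that the paper (and the reference~\cite{geelen}) has in mind when it labels the observation as folklore without spelling out an argument. Both directions are handled precisely: in the forward direction you correctly identify the branch sets with the connected components of the cut-edge spanning subgraph, note that spanning trees can be taken inside the cut, and that the inter-component edges must be non-cut-edges (hence monochromatic under the $A/B$-coloring); in the reverse direction you correctly observe that the color condition forces the cut $(c_{\text{tree}}^{-1}(1),c_{\text{tree}}^{-1}(2))$ to consist exactly of the tree edges, so contraction collapses each $T_i$ to a single vertex and the monochromatic inter-tree edges supply the $\binom{k}{2}$ edges of $K_k$. No gaps.
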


We are now ready for the proof of Theorem~\ref{thm:main}. 

\begin{proof}[Proof of Theorem~\ref{thm:main}]
Since $\lfloor t/2\rfloor+1=\lfloor (t-1)/2\rfloor+1$ if $t$ is an odd number, we may restrict ourselves to proving the assertion of Theorem~\ref{thm:main} for the case when $t$ is an even number. Let $k\ge 2$ be an integer such that $t=2(k-1)$. Our goal is then to prove that $G$ contains $K_{\lfloor t/2\rfloor+1}=K_k$ as an odd-minor. 

We start by constructing a proper coloring $c:V(G)\rightarrow \mathbb{N}$ of $G$ as follows. We take a partition $\mathcal{X}=(X_i)_{i=1}^{n}$ of the vertex-set of $G$ as given by Lemma~\ref{lemma:decompose}, and let $\eta:V(G)\rightarrow \{1,\ldots,n\}$ be the mapping that assigns to any vertex $x \in V(G)$ the unique index $i=:\eta(x)$ satisfying $x \in X_{i}$. 
Next, we let $A_i, B_i$ denote the two sides of the unique bipartition of $G[X_i]$ for every $i$. We then define  $$c(x):=\begin{cases}2\eta(x)-1, & \text{ if }x \in A_{\eta(x)}, \\ 2\eta(x), & \text{ if }x \in B_{\eta(x)} \end{cases}$$ for every $x\in V(G)$. Clearly $c$ is a proper coloring of $G$, and thus $\zig(G)\ge t=2k-2$ implies the existence of a sequence of vertices $z_1,z_2,\ldots,z_{2k-2}$ in $G$ such that $c(z_1)<c(z_2)<\ldots<c(z_{2k-2})$ and such that the edges between the two vertex-sets $Z_1:=\{z_i|i \text{ odd}\}$ and $Z_2:=\{z_i|i \text{ even}\}$ form a complete bipartite subgraph $B$ of $G$, which is isomorphic to $K_{k-1,k-1}$.

Let $M:=\{z_iz_j|1\le i<j\le 2k-2 \text{ and } \eta(z_i)=\eta(z_j)\}$. 
Observe that by definition of the coloring $c$, if $z_iz_j\in M$, then we have $|c(z_i)-c(z_j)| \le 1$. Using the strict ordering $c(z_1)<\cdots<c(z_{2k-2})$, it follows that $j=i+1$. This implies that $z_iz_j$ forms an edge of $B$. It is furthermore clear from the definition of $c$ and $c(z_1)<\cdots<c(z_{2k-2})$ that any two distinct pairs in $M$ must be disjoint. Thus, $M$ forms a matching in $B$. In the following, for each $i=1,\ldots,n$ pick and fix a spanning tree $T_i$ of the connected graph $G[X_i]$. The following claim will be used repeatedly in the arguments below.

\medskip

\paragraph{\textbf{Claim~($\ast$).} Let $I \subseteq [n]$ and let $f:\bigcup_{i \in I}{V(T_i)}\rightarrow \{1,2\}$ be any assignment that, restricted to each tree $T_i$ with $i \in I$, forms a proper coloring. If for some distinct indices $i, j \in I$ there exists an edge in $G$ connecting $T_i$ and $T_j$, then there also exists a monochromatic edge (with respect to $f$) connecting $T_i$ and $T_j$. }
\begin{proof}
W.l.o.g. assume $i<j$ and let $ab\in E(G)$ be such that $a \in V(T_i), b \in V(T_j)$. By choice of the partition $\mathcal{X}$, we know that $b$ has neighbors on both sides of the unique bipartition of the connected bipartite subgraph $G[X_{i}]$, which is the same as the unique bipartition of the spanning tree $T_{i}$. Since $f$ properly colors each of the trees $T_1,\ldots,T_n$, this implies that $b$ has neighbors of both colors $1$ and $2$ in $T_{i}$, and so in particular a neighbor of the same color as itself.
\end{proof}

We split the rest of the proof into two cases, depending on the size of $M$.
\medskip
\paragraph{\textbf{Case 1.} $|M|\le k-2$.} Let $M'\supseteq M$ be a perfect matching of the complete bipartite graph $B$ that extends $M'$. Let us label the edges in $M'$ as $M'=\{x_1y_1,\ldots,x_{k-1}y_{k-1}\}$ such that $x_1,\ldots,x_{k-1}\in Z_1$, $y_1,\ldots,y_{k-1}\in Z_2$ as well as $M=\{x_1y_1,\ldots,x_ry_r\}$ for some $0\le r \le k-2$. For each index $j \in \{1,\ldots,k\}$, we define a tree $T_j' \subseteq G$ as follows. If $j \le r$, then we define $T_j':=T_{\eta(x_j)}=T_{\eta(y_j)}$. If $r+1\le j\le k-2$, then define $T_j'$ as the tree obtained from the disjoint union of $T_{\eta(x_j)}$ and $T_{\eta(y_j)}$ by adding the edge $x_jy_j$. Finally, let $T_{k-1}':=T_{\eta(x_{k-1})}$ and $T_k':=T_{\eta(y_{k-1})}$. Let us consider a mapping $c_{\text{tree}}:\bigcup_{j=1}^{k}{V(T_j')} \rightarrow \{1,2\}$ that is obtained by piecing together proper colorings of the trees $T_1',\ldots,T_k'$. We claim that for every $1 \le j_1 <j_2 \le k$, there exists an edge in $G$ connecting $T_{j_1}'$ and $T_{j_2}'$ that is monochromatic with respect to $c_{\text{tree}}$. Indeed, by definition there exists at least one edge in $G$ that connects $T_{j_1}'$ and $T_{j_2}'$, namely the edge $x_{j_1}y_{j_2}$ if $j_2\le k-2$, the edge $y_{j_1}x_{k-1}$ if $j_2=k-1$, and the edge $x_{j_1}y_{k-1}$ if $j_2=k$. Since the vertex-set of both trees $T_{j_1}'$ and $T_{j_2}'$ are disjoint unions of the vertex-sets of some of the trees $T_1,\ldots,T_n$, by Claim~($\ast$) the existence of at least one edge between $T_{j_1}'$ and $T_{j_2}'$ already implies the existence of such an edge which is monochromatic w.r.t.~$c_{\text{tree}}$. This shows that $T_1',\ldots,T_k'$ indeed form an odd $K_k$-expansion in $G$, and this concludes the proof in this first case.
\medskip
\paragraph{\textbf{Case 2.} $|M|=k-1$.} Label the edges in $M$ as $x_1y_1,\ldots,x_{k-1}y_{k-1}$ such that  $x_1,\ldots,x_{k-1} \in Z_1$, $y_1,\ldots,y_{k-1}\in Z_2$ and $\eta(x_j)=\eta(y_j)<\eta(x_{k-1})=\eta(y_{k-1})$ for all $1\le j\le k-2$. Let $T_j':=T_{\eta(x_j)}=T_{\eta(y_j)}$ for every $1\le j \le k-2$, and let $T_{k-1}'$ and $T_k'$ be defined as the single-vertex trees consisting of $x_{k-1}$ and $y_{k-1}$, respectively. Let $c_{\text{tree}}:\bigcup_{j=1}^{k}{V(T_j')} \rightarrow \{1,2\}$ be obtained by piecing together proper colorings of the trees $T_1',\ldots,T_{k-2}'$ (which by definition are clearly vertex-disjoint) and assigning color $1$ to both $x_{k-1}$ and $y_{k-1}$. We now claim that between any two of the trees $T_1',\ldots,T_{k}'$, there exists at least one connecting edge in $G$ that is monochromatic w.r.t.~$c_{\text{tree}}$. For any $1\le j_1<j_2 \le k-2$, it follows directly by definition that $x_{j_1}y_{j_2}$ forms a connecting edge between $T_{j_1}=T_{\eta(x_{j_1})}$ and $T_{j_2}=T_{\eta(y_{j_2})}$, and thus by Claim~($\ast$) also a monochromatic connecting edge w.r.t.~$c_{\text{tree}}$ must exist. Furthermore, it follows directly by definition that $x_{k-1}y_{k-1}$ is a monochromatic edge connecting the trees $T_{k-1}'$ and $T_{k-2}'$. Finally, for every $1 \le j \le k-2$ we have that $\eta(x_j)=\eta(y_j)<\eta(x_{k-1})=\eta(y_{k-1})$, and both $x_{k-1}$ and $y_{k-1}$ have at least one neighbor in $T_j'=T_{\eta(x_j)}$, namely $y_j$ and $x_j$, respectively. These facts and the choice of $\mathcal{X}$ imply that both $x_{k-1}$ and $y_{k-1}$ have neighbors on both sides of the bipartition of $G[X_{\eta(x_j)}]$, which is the same as the bipartition of $T_j'$. Hence, both of them are connected to a vertex in $T_j'$ via a monochromatic edge. This establishes that $T_1',\ldots,T_k'$ form an odd $K_k$-expansion in $G$, concluding the proof also in the second case.
\end{proof}

Next, we present the compact proof of Theorem~\ref{thm:dolnikov}. 

\begin{proof}[Proof of Theorem~\ref{thm:dolnikov}]
Let $G$ be a graph that admits a Kneser-representation $\mathcal{H}=(V,E)$ such that $\mathrm{cd}(\mathcal{H})\ge t$. W.l.o.g. we may assume $G=\text{KG}(\mathcal{H})$. Let $U\subseteq V$ be a smallest set such that the restricted hypergraph $\mathcal{H}-U:=(V\setminus U, \{e\in E: e\cap U=\emptyset\})$ is $2$-colorable. Then $|U|=\mathrm{cd}(\mathcal{H})\ge t$. Finally, let $X_1, X_2$ denote the two color classes of a $2$-coloring of the hypergraph $\mathcal{H}-U$. 
\medskip
\paragraph{\textbf{Claim 1.} For every $u \in U$ and every $i \in \{1,2\}$ there exists a hyperedge $e \in E$ such that $e \subseteq X_i \cup \{u\}$ and $u \in e$.}
\begin{proof}
    Suppose that for some $u \in U$ and $i \in \{1,2\}$ no hyperedge $e$ with the stated properties exists. Then there exists no hyperedge of $\mathcal{H}$ that is fully contained in the set $X_i \cup \{u\}$, and thus the two sets $X_i \cup \{u\}$ and $X_{2-i}$ form the color classes of a proper coloring of the hypergraph $(V\setminus (U\setminus\{u\}), \{e\in E: e\cap (U\setminus \{u\})=\emptyset\})$. This contradicts our minimality assumption on $U$, which concludes the proof.
\end{proof}
Let $U_1:=\{u \in U|\{u\}\in E\}$ and $U_2:=U\setminus U_1$. By the Claim~1, for every $u \in U_2$ and every $i \in \{1,2\}$ there exists some edge $e(u,i) \in E$ such that $u \in e(u,i) \subseteq X_i \cup \{u\}$ and, since $\{u\}\notin E$, such that $e(u,i)\cap X_i \neq \emptyset$. These properties imply in particular that all elements of the multi-set $$S:=\{\{u\}\{u'\}|u \neq u' \in U_1\}\cup \{e(u,i)|u \in U_2, i \in \{1,2\}\}$$ are pairwise distinct, and so the latter is a set. 

By definition of these hyperedges, for every choice of elements $u,u'\in U$ and $i,j \in \{1,2\}$ such that $u_1 \neq u_2$ and $i \neq j$, we have $$e(u,i)\cap e(u',j)\subseteq (X_i\cup \{u\})\cap (X_j\cup \{u'\})=\emptyset,$$ and hence $e(u,i)$ and $e(u',j)$ are adjacent in $G$. Furthermore, by definition of $U_1$, for every $u_1 \in U_1$, $u_2 \in U_2$ and $i \in \{1,2\}$, we have $\{u_1\}\cap e(u_2,i)\subseteq U_1 \cap (U_2 \cup X_i)=\emptyset$, and hence $\{u_1\}$ and $e(u_2,i)$ are adjacent in $G$. Finally, the singletons $\{u\}, u \in U_1$ are clearly pairwise adjacent in $G$. 

Altogether, this implies that the graph $H$, defined via $V(H):=S$ and \begin{align*} E(H):&=\{\{u\}\{u'\}|u \neq u' \in U_1\} \\ &\cup \, \{\{u_1\}e(u_2,i)|u_1 \in U_1, u_2 \in U_2, i \in \{1,2\}\} \\ &\cup \, \{e(u,i)e(u',j)|u \neq u' \in U_2, i\neq j \in \{1,2\}\},\end{align*} forms a subgraph of $G$. In the following, for any integer $n \ge 0$, let us denote by $K_{n,n}^\ast$ the graph obtained from the complete bipartite graph $K_{n,n}$ by deleting the edges of a perfect matching, and for two graphs $G_1$ and $G_2$ on disjoint vertex-sets, let us denote by $G_1+G_2$ the graph obtained from $G_1 \cup G_2$ by adding the edges $\{ab|a \in V(G_1), b \in V(G_2)\}$. We can then observe that the graph $H\subseteq G$ defined above is isomorphic to the graph $K_{|U_1|}+K_{|U_2|,|U_2|}^\ast$. 
\medskip
\paragraph{\textbf{Claim 2.} For every pair of integers $n_1, n_2\ge 0$ such that $n_1+n_2\ge t$, the graph $K_{n_1}+K_{n_2,n_2}^\ast$ contains $K_t$ as a minor.}
\begin{proof}
Let us write the vertex-set of $K_{n_1}$ as $\{1,\ldots,n_1\}$, and the vertex-set of $K_{n_2,n_2}^\ast$ as $\{n_1+1,\ldots,n_1+n_2\} \times \{1,2\}$, such that two vertices $(x,i), (y,i) \in V(K_{n_2,n_2}^\ast)$ are adjacent if and only if $x\neq y$ and $i \neq j$. We claim that $K_{n_2,n_2}^\ast$ contains $K_{n_2}$ as a minor. Indeed, this is easy to see if $n_2\le 2$. Now suppose $n_2\ge 3$ and consider the perfect matching $$M=\{(x,1)(x+1,2)|x\in \{n_1+1,\ldots,n_1+n_2-1\}\} \cup \{(n_1+n_2,1)(n_1+1,2)\}$$ in $K_{n_2,n_2}^\ast$. Pause to note that contracting the edges in $M$ transforms $K_{n_2,n_2}^\ast$ into a complete graph $K_{n_2}$. This shows that $K_{n_2,n_2}^\ast$ always contains $K_{n_2}$ as a minor. It then readily follows that $K_{n_1}+K_{n_2,n_2}^\ast$ contains $K_{n_1}+K_{n_2}=K_{n_1+n_2}\supseteq K_t$ as a minor, and this concludes the proof.
\end{proof}
Recalling that we have $|U_1|+|U_2|=|U|\ge t$, Claim~2 now implies that $H$ (and thus $G$) contains $K_t$ as a minor, and this concludes the proof. 
\end{proof}

\section{Odd Hadwiger's conjecture for Kneser and Schrijver graphs}
In this section, we give the self-contained proof of Theorem~\ref{thm:schrijver}. Again, we work with the notion of odd clique-expansions introduced in the previous section.
\begin{proof}[Proof of Theorem~\ref{thm:schrijver}]
Let $n, k$ be positive integers such that $n \ge 2k$. In order to show that the Schrijver graph $S(n,k)$ contains $K_{n-2k+2}$ as an odd-minor, we will construct $n-2k+2$ vertex-disjoint trees $T_1, \ldots, T_{n-2k}, T_{n-2k+1}, T_{n-2k+2}$ contained in $S(n,k)$, and equip each of them with a proper coloring using colors $1$ and $2$ such that any pair of trees is joined by at least one monochromatic edge. The reader is encouraged to look at Figure~\ref{fig:schrijver} for an illustration of the somewhat technical construction.

\begin{figure}[h]
    \centering
    \includegraphics[scale=0.5]{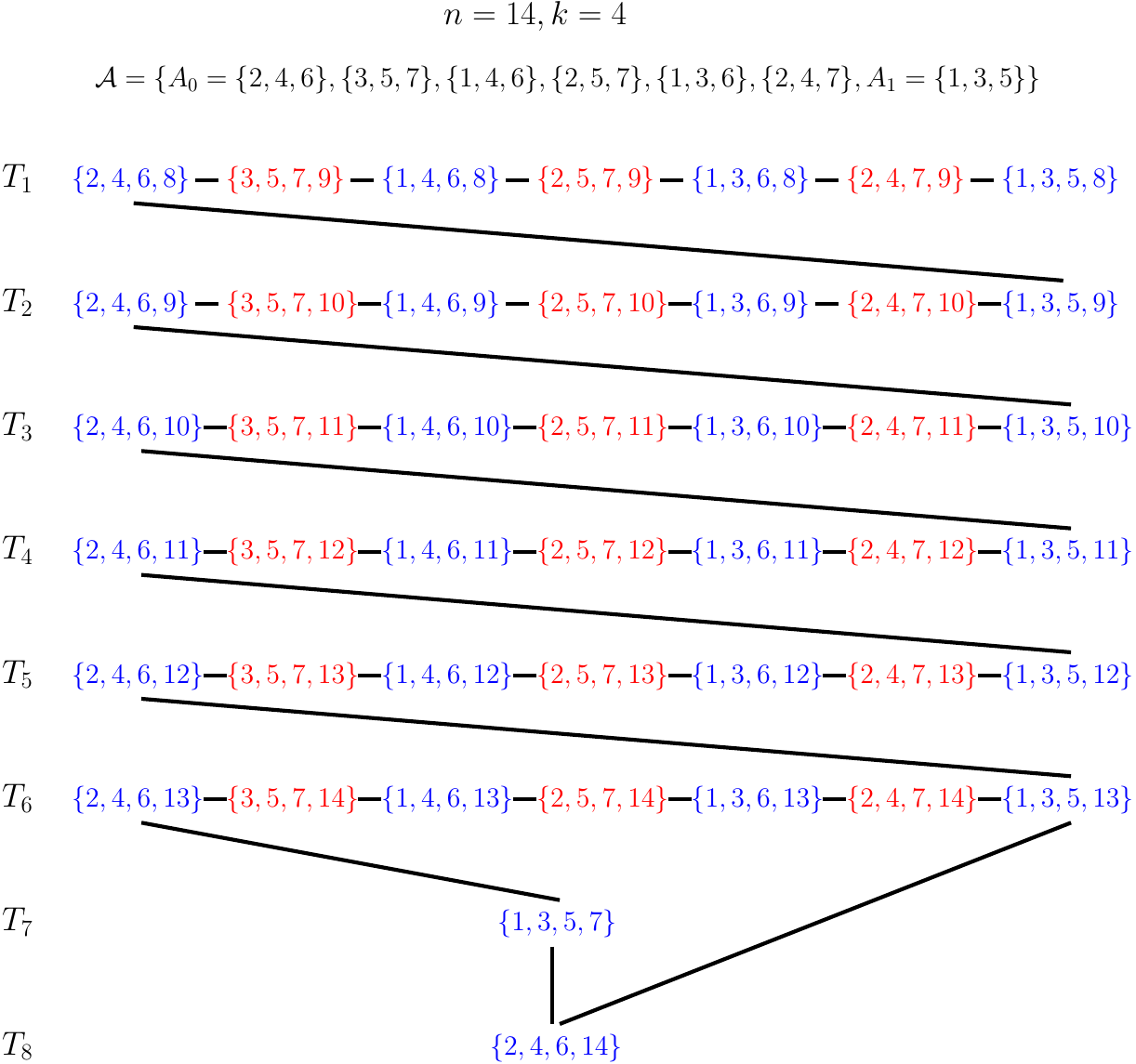}
    \caption{Illustration for the construction of the odd $K_{n-2k+2}$-expansion in $S(n,k)$ for the case $n=14, k=4$. Colors red and blue in the picture correspond to colors $1$ and $2$ in the proof, respectively.  For the sake of readability, only some of the monochromatic edges connecting pairs of distinct trees $T_i$ and $T_j$ are indicated.}\label{fig:schrijver}
\end{figure}

Let us start by describing the first $n-2k$ trees $T_1,\ldots,T_{n-2k}$. We define a collection $\mathcal{A}$ of $(k-1)$-subsets of $[2k-1]$ as follows. A set $A\subseteq [2k-1]$ is contained in $\mathcal{A}$ if and only if it is a vertex of the Schrijver graph $S(2k-1,k-1)$, i.e., if and only if $|A|=k-1$ and $A$ is a cyclically stable subset of $[2k-1]$. In particular this means that no set $A \in \mathcal{A}$ contains both of the elements $1$ and $2k-1$. Moreover, it is easy to see that apart from one special set in $\mathcal{A}$, namely the set $A_0:=\{2,4,\ldots,2k-2\}$ of even elements, all members of $\mathcal{A}$ need to contain \emph{exactly} one of the elements $1$ and $2k-1$. The latter fact allows us to naturally assign colors $1$ or $2$ to each set in $\mathcal{A}$, by defining a mapping $c:\mathcal{A}\rightarrow \{1,2\}$ as $$c(A):=\begin{cases}
    1, & \text{if }A=A_0, \\
    1, & \text{if } A\neq A_0, 1 \in A \text{ and } 2k-1 \notin A, \\
    2, & \text{if } A\neq A_0, 1 \notin A \text{ and } 2k-1 \in A.
\end{cases}$$
For $i=1,\ldots,n-2k$, we now define the vertex-set of the tree $T_i$ as the collection of $k$-sets $\mathcal{A}_i:=\{A\cup \{i+c(A)+(2k-2)\}|A \in \mathcal{A}\}$. Note that each set in $\mathcal{A}_i$ is indeed a cyclically stable $k$-subset of $[n]$: Suppose towards a contradiction that for some $A\in \mathcal{A}$, the set $A\cup \{i+c(A)+(2k-2)\}$ contains two cyclically consecutive elements of $[n]$. Since $A$ is a cyclically stable subset of $[2k-1]$, one of those two elements must be equal to $i+c(A)+(2k-2)$. Since $i+c(A)+(2k-2)\in [2k,n]$ and $A \subseteq [2k-1]$, this is only possible if $i+c(A)+(2k-2)=2k$ and $2k-1 \in A$, or $i+c(A)+(2k-2)=n$ and $1 \in A$. By definition of $c$, we have $c(A)=2$ in the first case and $c(A)=1$ in the second, implying $i=0$ and $i=n-2k+1$ respectively, a contradiction. Thus, we indeed have $\mathcal{A}_i \subseteq V(S(n,k))$ for every $i=1,\ldots,n-2k$, as desired. To finish the description of the trees $T_1,\ldots,T_{n-2k}$, we also have to specify their edge-sets. To do so let us understand the set of edges that $S(n,k)$ induces on the vertex-set $\mathcal{A}_i$. So let $B \cup \{i+c(B)+(2k-2)\}$ and $D \cup \{i+c(D)+(2k-2)\}$ be two members of $\mathcal{A}_i$, where $C, D \in \mathcal{A}$. We then have that $(B \cup \{i+c(B)+(2k-2)\})\cap (D \cup \{i+c(D)+(2k-2)\})=\emptyset$ if and only if  $B\cap D=\emptyset$ and $c(B)\neq c(D)$. From the definition of $c$ we can see that $B\cap D=\emptyset$ enforces $c(B)\neq c(D)$, unless $c(B)=c(D)=1$ and one of $B$ and $D$ is equal to $A_0$. There is a unique member of $\mathcal{A}$ of $c$-value $1$ that is disjoint from $A_0$, namely the set $A_1:=\{1,3,5,\ldots,2k-3\}$. We therefore find that the two sets $B \cup \{i+c(B)+(2k-2)\}$ and $D \cup \{i+c(D)+(2k-2)\}$ are adjacent in $S(n,k)$ if and only if $B$ and $D$ are adjacent in $S(2k-1,k-1)$ and $\{B,D\}\neq \{A_0 \cup \{i+2k-1\},A_1 \cup \{i+2k-1\}\}$. Since the graph $S(2k-1,k-1)$ is isomorphic to the odd cycle $C_{2k-1}$, this implies that $S(n,k)[\mathcal{A}_i]$ is obtained from a $(2k-1)$-cycle by removing a single edge between the pair $\{A_0 \cup \{i+2k-1\},A_1 \cup \{i+2k-1\}\}$. Thus, the graph $S(n,k)[\mathcal{A}_i]$ is a $(2k-1)$-vertex path with endpoints $A_0 \cup \{i+2k-1\}$ and $A_1 \cup \{i+2k-1\}$. We can therefore simply set $T_i:=S(n,k)[\mathcal{A}_i]$ for every $i=1,\ldots,n-2k$, knowing that these graphs indeed form trees (in fact, paths). We can observe that the coloring $c$ of the sets $\mathcal{A}$ naturally induces a $2$-coloring of each of the trees $T_i$, by assigning to every vertex $X\in V(T_1)\cup\cdots \cup V(T_{n-2k})$ the color given by $c(X\cap [2k])$. It is then not hard to see from the definition of $c$ and the trees $T_i$ that this indeed forms a proper coloring of each of the trees $T_1,\ldots,T_{n-2k}$. 

Finally, let us complete the description of the trees by defining $T_{n-2k+1}$ and $T_{n-2k+2}$. Both of these trees consist of a single vertex, namely $V(T_{n-2k+1})=\{1,3,\ldots,2k-1\}$, and $V(T_{n-2k+2})=\{2,4,\ldots,2k-2,n\}$. Their associated proper colorings both consist of assigning color $1$ to their respective vertex. 

We now claim that the collection of trees $T_1,\ldots,T_{n-2k+2}$, with each tree equipped with a proper $\{1,2\}$-coloring as described above, forms an odd $K_{n-2k+2}$-expansion in the Schrijver graph $S(n,k)$. All that is left to verify here is that (A) all the trees are pairwise vertex-disjoint and (B) for every two distinct $i, j \in [n-2k+2]$ there exists at least one monochromatic edge connecting $T_i$ and $T_j$. 

To verify (A), first note that the sets $\{1,3,\ldots,2k-1\}$, $\{2,4,\ldots,2k-2,n\}$ that form the single vertices of $T_{n-2k+1}$ and $T_{n-2k+2}$ are not contained in any of the sets $\mathcal{A}_i, i=1,\ldots,n-2k$. For $\{1,3\ldots,2k-1\}$ this follows directly by definition, since each set in one of the $\mathcal{A}_i$ has at most $k-1$ elements within $[2k]$, while $\{1,3\ldots,2k-1\}$ has $k$. For $\{2,4,\ldots,2k-2,n\}$ this follows since the only way a set of the form $B \cup \{c(B)+i+(2k-2)\}$ for some $B \in \mathcal{A}$ can contain the element $n$ is if $i=n-2k$ and $c(B)=2$, however, the latter implies $2k-1\in B$, in contrast to the set $\{2,4,\ldots,2k-2,n\}$, which does not contain this element. It thus remains to be shown that $\mathcal{A}_i\cap \mathcal{A}_j=\emptyset$ for $1 \le i<j\le n-2k$. Suppose towards a contradiction that $B\cup \{c(B)+i+(2k-2)\}\in \mathcal{A}_j$ for some $B \in \mathcal{A}$. This implies that $c(B)+j+(2k-2)=c(B)+i+(2k-2)$, a contradiction. 

To verify (B), let any pair $\{i,j\} \subseteq [n-2k+2]$ with $i <j$ be given, and let us find a monochromatic edge between $T_i$ and $T_j$. Suppose first $j \le n-2k$. Then we have $A_0 \cup \{i+2k-1\} \in \mathcal{A}_i=V(T_i)$ and $A_1\cup \{j+2k-1\}\in \mathcal{A}_j=V(T_j)$, and clearly $(A_0 \cup \{i+2k-1\}) \cap (A_1\cup \{j+2k-1\})=\emptyset$. Since the proper colorings of $T_i$ and $T_j$ assign color $1$ to both of these sets, we have found an edge of $S(n,k)$ going between $T_i$ and $T_j$ that is monochromatic, as desired. Next, suppose $j \in \{n-2k+1,n-2k+2\}$ and $i\le n-2k$. It is then easy to see that the pair of sets $A_0 \cup \{i+(2k-1)\}, \{1,3,\ldots,2k-1\}$ (if $j=n-2k+1$), and the pair of sets $A_1 \cup \{i+(2k-1)\}, \{2,4,\ldots,2k-2,n\}$ (if $j=n-2k+2$) forms a monochromatic edge between $T_i$ and $T_j$ in this case. Finally, if $i=n-2k+1, j=n-2k+2$ the unique edge between $T_i$ and $T_j$ is clearly monochromatic. Finally, this shows that $S(n,k)$ contains an odd $K_{n-2k+2}$-expansion, and thus contains $K_{n-2k+2}$ as an odd-minor. This concludes the proof. 
\end{proof}

\end{document}